\newtheorem{thm}{Theorem}[section]
\newtheorem{lem}[thm]{Lemma}
\newtheorem{cor}[thm]{Corollary}
\theoremstyle{definition}
\newtheorem{defi}[thm]{Definition}
\theoremstyle{remark}
\newcommand{\R}{\mathbb{R}}
\newcommand{\N}{\mathbb{N}}
\newcommand{\cM}{\mathcal{M}}
\newcommand{\ga}{\gamma}
\newcommand{\om}{\omega}
\newcommand{\Om}{\Omega}
\newcommand{\la}{\lambda}
\renewcommand{\phi}{\varphi}
\newcommand{\crt}{\operatorname{crt}}
\newcommand{\diam}{\operatorname{diam}}
\newcommand{\CAT}{\operatorname{CAT}}
\newcommand{\id}{\operatorname{id}}
\newcommand{\Trip}{\operatorname{Trip}}
\renewcommand{\d}{\partial}
\newcommand{\di}{\d_{\infty}}
\newcommand{\sm}{\setminus}
\newcommand{\sub}{\subset}
\newcommand{\ov}{\overline}
\newcommand{\pari}{\partial_{\infty}}
\begin{document}

\title{Trees and ultrametric M\"obius structures}
\author{Jonas Beyrer
\ \& Viktor Schroeder\footnote{Supported by Swiss National
Science Foundation Grant 153607}}

\date{}
\maketitle

\begin{abstract}
We define the concept of an ultrametric M\"obius space and use this to 
characterize nonelementary geodesically complete trees.
\end{abstract}

\noindent{\bf Keywords} Trees, M\"obius structures, ultrametrics

\medskip

\noindent{\bf Mathematics Subject Classification} 53C35, 53C23

\section{Introduction}
\label{sect:introduction}

In this paper we define the concept of an ultrametric M\"obius space $(Z,\cM)$, where $Z$ is some set with cardinality
$|Z|\geq 3$ and $\cM$ an ultrametric M\"obius structure (see section \ref{sec:M-structure}). 
These M\"obius spaces describe in a natural way the asymptotic geometry of trees.
To a metric tree $(X,d)$ one can associate in a natural way a boundary at infinity $\di X$. 
We call a tree $(X,d)$ geodesically complete, if every geodesic segment can be extended to a complete geodesic line.
We call a tree nonelementary, if $|\di X|\geq 3$.
The boundary $\di X$ of a nonelementary tree $(X,d)$
carries a canonical ultrametric
M\"obius structure $\cM_X$.
If $F:(X,d)\to (X',d')$ is an isometric embedding of trees, then $F$ extends in a natural way to a M\"obius map
$f:(\di X,\cM_X) \to (\di X,\cM_{X'})$.

We show that the association 
$(X,d) \mapsto (\di X, \cM_X)$ defines an equivalence between
the category of isometry classes of nonelementary geodesically complete trees
and the category of M\"obius classes of ultrametric M\"obius spaces.
Therefore we will prove the following facts:

\begin{enumerate}
 \item Let $(Z,\cM)$ be a complete ultrametric M\"obius space. 
Then there exists a unique nonelementary geodesically complete tree $(X,d)$, such that $(\partial_{\infty} X,\cM_{X})$
is M\"obius equivalent to $(Z,\cM)$.

\item Let $(X,d)$ and  $(X',d')$ be nonelementary geodesically complete trees. Then for
any M\"obius embedding $f:(\pari X,\cM_X) \to (\pari X',\cM_{X'})$, there exists a unique isometric embedding $F:(X,d)\to (X',d')$
such that the extension of $F$ to the boundary coincides with $f$.
If $f$ is in addition surjective, then $F$ is an isometry.   
\end{enumerate}

These results are in the spirit of \cite{Hu} ,who investigated the case of trees with a basepoint.
Probably all of the results of this paper are known in some way but usually formulated in a different language. 
To our opinion the viewpoint
using ultrametric M\"obius structures is new and worth to be formulated.
Our paper is also inspired by the work of \cite{Bi}. In this paper the author associates to a certain M\"obius
space $(Z,\cM)$ a filling $(X,d)$ in the more general context of $\CAT(-1)$ spaces. The idea in \cite{Bi} is to
define the filling as the subset $\cM^a_1 \subset \cM_X$ of
antipodal diameter 1 metrics in $\cM_X$ with a canonical metric 
$d_{\cM^a_1}$ on it (compare section \ref{sec:ad1}). Indeed one obtains the following fact (which is a slight generalization of
a result in \cite{Bi}):

\begin{itemize}
 \item[3.] Let $(Z,\cM)$ be a complete ultrametric M\"obius space,  $\cM^a_1 \subset \cM_X$ the subset of
antipodal diameter 1 metrics and $(X,d)$ the tree from fact 1. Then $i:(X,d)\to (\cM^a_1,d_{\cM^a_1})$, $x\mapsto \rho_x$ is an isometry.
Here $\rho_x$ is the Bourdon metric at $x$. 
\end{itemize}

The structure of the paper is as follows. In section \ref{sec:M-structure} we introduce the concept of
ultrametric M\"obius spaces, in section \ref{sec:t} we recall some facts about metric trees. 
In section \ref{sec:tree-versus-M} we prove facts 1 and 2 and in the final section \ref{sec:ad1} we show
fact 3.

\section{Ultrametric M\"obius spaces}\label{sec:M-structure}

\subsection{M\"obius Structure} 

Let
$Z$ 
be a set with cardinality 
$|Z| \ge 3$.
An {\em extended metric} on 
$Z$ 
is a map 
$\rho:Z\times Z \to [0,\infty ]$,
such that
there exists
a set
$\Om(\rho) \sub Z$ with
cardinality
$ |\Om(d)| \in \{ 0,1\}$, 
such that 
$\rho$
restricted to the set
${Z\sm \Om(\rho)}$ 
is a metric 
(taking only values in $[0,\infty)$) and such that
$\rho(z,\omega)=\infty$ 
for all 
$z\in Z\setminus \Om(\rho)$, $\om \in \Om(d)$. 
Furthermore $d(\omega,\omega)=0$.\\
If
$\Om(\rho)$
is not empty,
we sometimes denote
$\om \in \Om(\rho)$ simply as
$\infty$ and call it the
(infinitely) {\em remote point}
of
$(Z,\rho)$.
We often write also
$\{\om\}$ for the set
$\Om(\rho)$ and
$Z_{\om}$ for the set
$Z\sm \{\om\}$.

We consider on $(Z,\rho)$  the topology with the 
basis consisting of all open distance balls $B_{r}(z)$
around points in $z\in Z_{\om}$ and the
complements $D^C$ of all closed distance balls $D=\overline{B}_r(z)$.
Note that $(Z,\rho)$ is a Hausdorff space.\\

We call an extended metric space {\em complete}, if first every
Cauchy-sequence in $Z_{\om}$ converges and secondly
if the infinitely remote point $\om$ exists in case that $Z_{\om}$
is unbounded.
For example the real line
$(\R,\rho)$,
with its standard metric is {\em not} complete as an extended metric space,
while
$(\R\cup\{\infty\},\rho)$ is complete.

One can also characterize completeness using the following notation.

\begin{defi}
Let $(Z,\rho)$ be an extended metric space. A sequence $(z_i)\subset Z$ is a \emph{Cauchy-sequence in $(Z,\rho)$}, if one the following is true:
\begin{enumerate}
\item For an arbitrary point $o\in Z_{\om}$ there exists $C>0$ and $n\in \N$, such that $\rho(o,z_i) \leq C $ for 
all $i\geq n$ and the sequence $(z_i)_{i\geq n}$ is a Cauchy-sequence 
in $Z_{\om}$ in the classical sense.
\item For an arbitrary point $o\in Z_{\om}$ and every $C\in \R$, there is an $n\in\N$, such that $\rho(o,z_i)\geq C$ for all $i\geq n$.
\end{enumerate}
\end{defi}

Then completeness of $(Z,\rho)$ is equivalent to the convergence of Cauchy-sequences. A Cauchy-sequence of the second type is converging to 
the remote point $\om$.

We say that a quadruple 
$(x,y,z,w)\in Z^4$ is 
{\em admissible}, if no entry occurs three or
four times in the quadruple.
We denote with
$Q\sub Z^4$
the set of 
admissible quadruples.
We define the {\em cross ratio triple} as the map
$\crt:\ Q \to \Sigma \subset \R P^2$ 
which maps 
admissible quadruples to points in the real projective plane defined by
$$\crt_{\rho}(x,y,z,w)=(\rho(x,y)\rho(z,w): \rho(x,z)\rho(y,w) : \rho(x,w)\rho(y,z)),$$
here 
$\Sigma$ 
is the subset of points 
$(a:b:c) \in \R P^2$, 
where
all entries 
$a,b,c$ 
are nonnegative or all entries are non-positive.
Note that 
$\Sigma$ can be identified with the standard $2$-simplex,
$\{(a,b,c)\, |\, a,b,c \ge 0,\, a+b+c=1\}$.

We use the standard conventions for the calculation with 
$\infty$.
If 
$\infty$ occurs once in 
$Q$, say 
$w=\infty$,
then
$\crt_{\rho}(x,y,z,\infty)=(\rho(x,y):\rho(x,z):\rho(y,z))$.
If 
$\infty$ 
occurs twice , say $z=w=\infty$ then
$\crt_{\rho}(x,y,\infty,\infty)=(0:1:1)$.

The cross ratio triple is a more symmetric way to define the classical
cross ratio $[.,.,.,.]_{\rho}:Q\to [0,\infty]$ which we define as
$$[x,y,z,w]_{\rho}:=\frac{\rho(x,z)\rho(y,w)}{\rho(x,y)\rho(z,w)}.$$

It is not difficult to check that 
$\crt_{\rho}:Q\to \Sigma$ 
is continuous,
where $Q$ and $\Sigma$ carry the obvious topologies induced by
$(Z,\rho)$ and $\R P^2$. Thus, if $(x_i,y_i,z_i,w_i) \in Q$ for $i \in \N$ and assume
$x_i\to x,\ldots,w_i\to w$, where $(x,y,z,w)\in Q$ then 
$\crt_{\rho}(x_i,y_i,z_i,w_i)\to\crt_{\rho}(x,y,z,w)$.

One can characterize convergence in $(Z,\rho)$ and the Cauchy-sequence property in terms of the cross ratio by
the following Lemma (where we omit the proof) :

\begin{lem} \label{lem:Cauchy-sequence}
Let $(Z,\rho)$ be an extended metric space. 
A sequence $(z_i) \subset Z$ converges to $z\in Z$, if and only if there are distinct points  $a,b \in Z$, s.t. $\crt_{\rho}(z,z_j,a,b)\to (0:1:1)$. \\
A sequence $(z_i)\subset Z$ is a Cauchy-sequence in $(Z,\rho)$, if and only if
there are distinct points $a,b\in Z$, s.t. $\crt_{\rho}(z_i,z_j,a,b)\to (0:1:1)$ for $i,j\to\infty$.
\end{lem}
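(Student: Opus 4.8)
The plan is to unwind both statements directly from the definitions of the topology on $(Z,\rho)$ and of the cross ratio triple, treating separately the cases where the relevant points are finite or remote. First I would fix notation: given a sequence $(z_i)$ and the candidate limit $z$, I choose the two auxiliary points $a,b$ and compute $\crt_\rho(z,z_j,a,b)$. If $z$, $a$, $b$ all lie in $Z_\om$, then by definition $\crt_\rho(z,z_j,a,b)=\bigl(\rho(z,z_j)\rho(a,b):\rho(z,a)\rho(z_j,b):\rho(z,b)\rho(z_j,a)\bigr)$, and normalizing, convergence of this point in $\Sigma$ to $(0:1:1)$ is equivalent to $\rho(z,z_j)\to 0$ (using that $\rho(a,b)>0$ and that $\rho(z_j,a)\to\rho(z,a)$, $\rho(z_j,b)\to\rho(z,b)$ are bounded once $z_j$ is close to $z$). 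This gives the forward direction with any fixed distinct $a,b\in Z_\om\setminus\{z\}$ (which exist since $|Z|\ge 3$), and it gives the reverse direction once one observes that $\crt_\rho(z,z_j,a,b)\to(0:1:1)$ forces $\rho(z,z_j)$ to be eventually finite and tending to $0$.

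The case $z=\om$ (the remote point) needs the second type of Cauchy sequence: here one takes $a,b\in Z_\om$ distinct, and by the conventions for $\infty$ one has $\crt_\rho(\om,z_j,a,b)=(\rho(z_j,a):\rho(a,b):\rho(z_j,b))$, wait — more precisely $\crt_\rho(\om,z_j,a,b)=(\rho(\om,z_j)\rho(a,b):\rho(\om,a)\rho(z_j,b):\rho(\om,b)\rho(z_j,a))$, and since $\rho(\om,z_j)=\rho(\om,a)=\rho(\om,b)=\infty$ while $\rho(a,b)<\infty$, dividing out the infinities this becomes $(\rho(a,b):\rho(z_j,b):\rho(z_j,a))$; hmm, let me instead reorder so $\om$ sits in a convenient slot. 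Choosing $a=\infty$ in the quadruple $(z,z_j,a,b)$ with $z\in Z_\om$ and using the stated rule $\crt_\rho(x,y,z,\infty)=(\rho(x,y):\rho(x,z):\rho(y,z))$ is cleaner; for the remote limit I would put $z_j$ and $z=\om$ so that the convention for $\infty$ occurring twice or once applies, and read off that $\crt_\rho\to(0:1:1)$ exactly says $\rho(o,z_j)\to\infty$ for a reference point $o$. Either way the bookkeeping is a finite check of the $\infty$-conventions.

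For the Cauchy-sequence statement I would proceed identically but with $z_i$ in place of $z$: for distinct $a,b\in Z_\om$, $\crt_\rho(z_i,z_j,a,b)=\bigl(\rho(z_i,z_j)\rho(a,b):\rho(z_i,a)\rho(z_j,b):\rho(z_i,b)\rho(z_j,a)\bigr)$, and convergence to $(0:1:1)$ as $i,j\to\infty$ is equivalent to $\rho(z_i,z_j)/\bigl(\rho(z_i,a)\rho(z_j,b)\bigr)\to 0$ together with the two ``off-diagonal'' ratios tending to $1$. The content is then: either the $\rho(z_i,a)$ stay bounded, in which case $\rho(z_i,z_j)\to 0$ and we are in case (1) of the Cauchy definition; or $\rho(z_i,a)\to\infty$, in which case the asymptotics of the off-diagonal entries force $\rho(z_i,a)/\rho(z_j,a)\to 1$ and hence $\rho(z_i,z_j)$ small relative to a growing quantity — this is precisely case (2), the sequence escaping to the remote point. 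Converting the cross-ratio condition into this dichotomy is the only place where a small argument (rather than pure symbol-pushing) is needed.

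The main obstacle I expect is the bookkeeping around the remote point $\om$ and the $\infty$-conventions: one must be careful that the auxiliary points $a,b$ can always be chosen in $Z_\om$ (guaranteed by $|Z|\ge 3$ and by discarding at most the one point $\om$), and that in the ``escaping'' case the normalization in $\R P^2$ is handled so that a quotient of two infinite quantities is interpreted correctly via the cross-ratio conventions already fixed in the text. Beyond that the argument is a routine equivalence between the $\ep$-$\delta$ definition of convergence in $(Z,\rho)$ and the claimed cross-ratio limit, which is why the authors omit it; I would present it exactly as above, splitting into the finite-limit case, the remote-limit case, and for Cauchy sequences the bounded-versus-unbounded dichotomy.
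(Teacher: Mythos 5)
The paper states this lemma with the proof explicitly omitted, so there is no argument of the authors to compare yours against. Your plan is correct and is surely the intended one: normalize the cross ratio triple, use the triangle inequality to control the two ``off-diagonal'' entries, and split into the finite-limit case, the remote-limit case, and (for Cauchy sequences) the bounded-versus-escaping dichotomy, which matches exactly the two cases in the paper's definition of a Cauchy sequence in $(Z,\rho)$.

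Three loose ends should be tied up before this counts as a proof. First, two distinct auxiliary points $a,b\in Z_{\om}\setminus\{z\}$ need not exist: if $|Z|=3$ and both $\om$ and $z$ lie in $Z$, then $Z_{\om}\setminus\{z\}$ is a single point, so in the forward direction you must allow $b=\om$ and invoke the stated $\infty$-convention, which still works since $\crt_{\rho}(z,z_j,a,\om)=(\rho(z,z_j):\rho(z,a):\rho(z_j,a))$. Second, in the reverse directions the points $a,b$ are \emph{given}, so you must also treat the cases where one of them is $\om$ or coincides with $z$; the latter is harmless because then one entry of the triple vanishes identically and it cannot tend to $(0:1:1)$, but this needs saying. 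Third, your dichotomy for the Cauchy statement (``either $\rho(z_i,a)$ stays bounded or $\rho(z_i,a)\to\infty$'') silently skips the mixed case in which $\rho(z_i,a)$ has both bounded and unbounded subsequences; you need to observe that the cross-ratio hypothesis itself excludes this, e.g.\ a bounded subsequence $\rho(z_{i_k},a)\le C$ together with $\crt_{\rho}(z_{i_k},z_j,a,b)\to(0:1:1)$ forces $\rho(z_{i_k},z_j)\to 0$ (compare the first entry with the third and use $\rho(z_j,a)\le\rho(z_j,z_{i_k})+C$), whence the entire tail of the sequence is bounded and one lands in case (1). With these points addressed, the argument is complete and is the routine verification the authors chose to omit.
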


A map
$f:(Z,\rho)\to (Z',\rho')$
between two extended metric spaces
is called
{\em M\"obius}, if 
$f$ is injective and for all
admissible quadruples
$(x,y,z,w)$ of
$Z$,
$$\crt_{\rho'}(f(x),f(y),f(z),f(w))=\crt_{\rho}(x,y,z,w).$$

Now Lemma \ref{lem:Cauchy-sequence} implies that M\"obius maps are continuos and Cauchy-sequences are mapped to Cauchy-sequences.

Two extended metric spaces
$(Z,\rho)$ and
$(Z',\rho')$ are
{\em M\"obius equivalent},
if there exists a bijective
M\"obius map
$f:Z\to Z'$.
In this case also
$f^{-1}$ is a M\"obius map and
$f$ is in particular a
homeomorphism.
Furthermore, by Lemma \ref{lem:Cauchy-sequence} $(Z,\rho)$ is complete if and only if $(Z',\rho')$ is.\\
We say that two extended metrics
$\rho$ and $\rho'$
on a set 
$Z$ are
{\em M\"obius equivalent},
if the identity map
$\id:(Z,\rho)\to (Z,\rho')$
is a M\"obius map.
M\"obius equivalent metrics define
the same topology on
$Z$ and $(Z,\rho)$ is complete if and only $(Z,\rho')$ is.

A {\em M\"obius structure} on a set
$Z$ is a nonempty set
$\cM$ of extended metrics on
$Z$,
which are
pairwise M\"obius equivalent and which is
maximal with respect to that property.
Thus $\cM$ is just an equivalence class of M\"obius equivalent
extended metrics on $Z$.

Given an extended metric $\rho$ on $Z$, we denote by
$[\rho]$ the equivalence class of all M\"obius equivalent extended metrics.
Then $[\rho]$ is a M\"obius structure on $Z$.

A {\em M\"obius space} is a pair $(Z,\cM)$ of a set $Z$ with cardinality $|Z|\ge 3$ and
a M\"obius structure $\cM$ on $Z$.
If $(Z,\cM)$ is a M\"obius space and $(x,y,z,w)$ an admissible quadruple, then
the cross ratio triple is independent of the metric and hence
$\crt(x,y,z,w) := \crt_{\cM}(x,y,z,w): = \crt_{\rho}(x,y,z,w)$ is well defined, where $\rho \in \cM$.

A M\"obius space $(Z,\cM)$ is a topological space, since $\rho , \rho' \in \cM$ define the same topology on $Z$.
A M\"obius space is {\em complete}, if $(Z,\rho)$ is complete for $\rho \in \cM$.

A map $f:(Z,\cM) \to (Z',\cM')$ between two M\"obius spaces is called {\em M\"obius}, if it is injective
and preserves the cross ratio triple. The spaces are {\em M\"obius equivalent} if there exists a
bijective M\"obius map between them.

Similar as for metric spaces, one can define the completion of
a M\"obius space $(Z,\cM)$. Define the set $\ov{Z}$ to be the set of equivalence classes
of Cauchy-sequences, where two Cauchy-sequences $(x_i)$ and $(y_i)$ are called equivalent,
if $\crt(x_i,y_i,a,b) \to (0:1:1)$ for some distinct $a,b$. If $\rho \in \cM$ then define an
extended metric $\ov{\rho}$ on $\ov{Z}$ by
$\ov{\rho}((x_i),(y_i)) := \lim \rho(x_i,y_i)$, here $(x_i)$ and $(y_i)$ are nonequivalent
Cauchy-sequences. The completion of $(Z,\cM)$ is unique up to M\"obius equivalence.

\subsection{Ultrametric M\"obius structures}

A point $(a:b:c)\in \Sigma \subset \R P^2$, $a,b,c \geq 0$ is called {\em ultrametric}, if
the two largest of the numbers $a,b,c$ coincide. 
The set of ultrametric points in $\Sigma$ is
{\em Y-shaped} and consists out of the three affine segments from the central point
$(1:1:1)$ to the three points $(0:1:1), (1:0:1), (1:1:0)$.

A M\"obius space $(Z,\cM)$ is called {\em ultrametric}, if for all admissible quadruples
$(x,y,z,w)$ the cross ratio triple
$\crt(x,y,z,w)$ is ultrametric.

As usual, one calls an extended metric $\rho$ on a set $Z$ {\em ultrametric}, if for 
distinct $x,y,z \in Z_{\om}$ the point
$(\rho(x,y):\rho(x,z):\rho(y,z))$ is ultrametric.

If $(Z,\rho)$ is an extended ultrametric space and $\cM = [\rho]$, then
$(Z,\cM)$ is an ultrametric M\"obius space. This follows e.g. from \cite{BS} Lemma 5.1.2.
On the other side, if $(Z,\cM)$ is ultrametric, then not necessarily all $\rho \in \cM$ are ultrametrics.
Consider for example the set $Z=\{x,y,z,w\}$ and let
$\rho(x,y) = t$, $\rho(z,w) =1/t$, $\rho(x,z)=s$, $\rho(y,w)=1/s$, $\rho(x,w)= r$, $\rho(y,z)=1/r$, where
$t,s,r$ are numbers close to one. The only relevant cross ratio triple is
$\crt_{\rho}(x,y,z,w)=(1:1:1)$ which is ultrametric. All these metrics are M\"obius equivalent but only
for $s=t=r=1$ this is an ultrametric.

However in two cases one obtains ultrametrics:

\begin{lem} \label{omega-metrics-are-ultrametric}
 Let $(Z,\cM)$ be an ultrametric M\"obius space and $\rho \in \cM$ such that a remote point
 $\om\in\Om(\rho)$ exists. Then $\rho$ is an ultrametric.
\end{lem}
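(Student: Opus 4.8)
The claim is that if $(Z,\cM)$ is an ultrametric Möbius space and $\rho\in\cM$ has a remote point $\om$, then $\rho$ restricted to $Z_\om$ is an ultrametric. So I need to show: for any three distinct points $x,y,z\in Z_\om$, the triple $(\rho(x,y):\rho(x,z):\rho(y,z))$ is ultrametric, i.e. the two largest of these three numbers coincide.

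Let me think about what Möbius structure gives me. The cross ratio triple $\crt(x,y,z,\om)$ is ultrametric by hypothesis. By the stated convention for the cross ratio when $\infty$ occurs once (with $w=\om$),
$$\crt_\rho(x,y,z,\om)=(\rho(x,y):\rho(x,z):\rho(y,z)).$$
But this is *exactly* the triple I want to show is ultrametric! So the statement is almost immediate from the definitions.

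**Plan.** First I would recall the convention: for an admissible quadruple $(x,y,z,w)$ with $w=\om\in\Om(\rho)$, the cross ratio triple is $\crt_\rho(x,y,z,\om)=(\rho(x,y):\rho(x,z):\rho(y,z))$ — this comes from the standard conventions for calculating with $\infty$, where each of the three products $\rho(x,y)\rho(z,\om)$, $\rho(x,z)\rho(y,\om)$, $\rho(x,w)\rho(y,z)$ has exactly one factor equal to $\infty$ in the last two slots, so one factors out the common $\infty$. Second, I would observe that since $x,y,z\in Z_\om$ are distinct and $\om\neq x,y,z$, the quadruple $(x,y,z,\om)$ is admissible (no entry repeats). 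Third, since $(Z,\cM)$ is ultrametric, $\crt(x,y,z,\om)$ is an ultrametric point of $\Sigma$, and since this cross ratio triple equals $(\rho(x,y):\rho(x,z):\rho(y,z))$, the two largest of $\rho(x,y),\rho(x,z),\rho(y,z)$ coincide. This holds for all distinct triples in $Z_\om$, which is precisely the definition of $\rho$ being ultrametric (recall distances among points of $Z_\om$ are finite since $\om$ is the unique point at infinite distance).

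**The main point.** There is essentially no obstacle here; the lemma is a direct unwinding of the definition of "ultrametric Möbius space" combined with the degenerate-case convention for the cross ratio triple at the remote point. The only thing to be a little careful about is the bookkeeping with $\infty$: one must confirm that the convention really does yield $(\rho(x,y):\rho(x,z):\rho(y,z))$ and not some other normalization — but this is explicitly the stated convention in the excerpt, so it can simply be cited. The contrast with the four-finite-point example given just before the lemma (where M\"obius equivalent metrics need not be ultrametric) is explained by the fact that with a remote point, one coordinate of the cross ratio quadruple is "used up" by $\om$, leaving exactly the three pairwise distances among $x,y,z$, whereas with four finite points the cross ratio only constrains products of distances and leaves genuine freedom.
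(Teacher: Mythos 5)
Your proof is correct and takes exactly the same route as the paper: apply the convention $\crt_\rho(x,y,z,\om)=(\rho(x,y):\rho(x,z):\rho(y,z))$ to the admissible quadruple $(x,y,z,\om)$ and invoke the ultrametric property of the cross ratio triple. The paper's proof is a three-line version of your argument; your extra care about admissibility and the $\infty$-bookkeeping is fine but not needed beyond what you wrote.
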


\begin{proof}
Consider without loss of generality $x,y,z\in Z_{\om}$. Then
\begin{align*}
crt_{\rho}(x,y,z,\omega)=(\rho(x,y):\rho(x,z):\rho(y,z)).
\end{align*}
As $crt_{\rho}$ is ultrametric it follows that $\rho$ is an ultrametric.
\end{proof}

A metric $\rho$ on $Z$ is called {\em antipodal diameter 1 metric}, if
$\diam (Z,\rho) =1$ and every $z\in Z$ has an antipodal point $z'\in Z$, i.e. $\rho(z,z')= 1$.

\begin{lem}\label{diam1-antipodal-metrics-are-ultrametric}
Let $(Z,\cM)$ be an ultrametric M\"obius space and $\rho \in \cM$ an antipodal diameter 1 metric, 
then $\rho$ is an ultrametric.
\end{lem}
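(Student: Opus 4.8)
The plan is to argue by contradiction, using the antipodal structure to manufacture a fourth point that plays the role of a remote point (so that the argument of Lemma \ref{omega-metrics-are-ultrametric} can be imitated). Since $\rho$ is an antipodal diameter $1$ metric it has no remote point, takes only values in $[0,1]$, and the value $1$ is attained on every antipodal pair. So it suffices to show that for any three distinct points $x,y,z\in Z$ the two largest of $\rho(x,y),\rho(x,z),\rho(y,z)$ coincide. Suppose this fails. Then one of the three distances is a strict maximum, and after relabeling I may assume $\rho(x,y)>\rho(x,z)\geq\rho(y,z)$.

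Next I would pick an antipodal point $z'$ of $z$, so $\rho(z,z')=1$. First dispose of the degenerate coincidences: if $z'=x$ then $\rho(x,z)=1\geq\rho(x,y)$, contradicting $\rho(x,y)>\rho(x,z)$; if $z'=y$ then $\rho(y,z)=1\geq\rho(x,y)$, again a contradiction. Hence $z'$ is distinct from $x,y,z$, the quadruple $(x,y,z,z')$ is admissible, and using $\rho(z,z')=1$ one gets
\[
\crt_\rho(x,y,z,z')=\bigl(\rho(x,y):\rho(x,z)\rho(y,z'):\rho(x,z')\rho(y,z)\bigr).
\]

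Finally, since all distances are at most $1$, the second coordinate is $\leq\rho(x,z)<\rho(x,y)$ and the third is $\leq\rho(y,z)\leq\rho(x,z)<\rho(x,y)$, so the first coordinate is the strict maximum of the three entries. Thus $\crt_\rho(x,y,z,z')$ is not ultrametric, contradicting the hypothesis that $(Z,\cM)$ is an ultrametric M\"obius space; therefore $\rho$ is an ultrametric. I expect the only points requiring care to be bookkeeping ones: checking that ``the two largest fail to coincide'' genuinely forces a \emph{unique} maximum (so the relabeling is legitimate), and handling the cases $z'\in\{x,y\}$ before forming the cross ratio triple. Everything else is the one-line estimate coming from $\diam(Z,\rho)=1$.
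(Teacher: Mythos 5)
Your proposal is correct and follows essentially the same argument as the paper: assume a strict maximum $\rho(x,y)>\rho(x,z)\geq\rho(y,z)$, take $z'$ antipodal to $z$, and use $\diam(Z,\rho)=1$ to see that the first entry of $\crt_\rho(x,y,z,z')$ strictly dominates the other two, contradicting ultrametricity. Your extra check that $z'\notin\{x,y\}$ (which the paper omits, though it is automatic since $z'=x$ or $z'=y$ would force one of the smaller distances to equal $1$) is a harmless refinement of the same proof.
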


\begin{proof}
Assume the contrary, then there are
$x,y,z\in Z$ such that $\rho(x,y)>\rho(x,z)\geq \rho(y,z)$. Let $z^{'}\in Z$ with $\rho(z,z^{'})=1$. Then 
\begin{align*}
\rho(x,y)\rho(z,z^{'})>\rho(x,z)\rho(y,z^{'})\quad\wedge\quad \rho(x,y)\rho(z,z^{'})>\rho(y,z)\rho(x,z^{'}),
\end{align*}
which contradicts the fact that the cross ratio triple $\crt(x,y,z,z')$ is ultrametric. 
\end{proof}

An ultrametric M\"obius space is in ptolemaic in the sense of \cite{FS}, which implies:

\begin{lem}\label{ultrametric-is-ptolemaic}
Let $(Z,\cM)$ be an ultrametric M\"obius space and let $\om \in Z$, then there exists $\rho \in \cM$, such that
$\Om(\rho) = \{\om\}$.
\end{lem}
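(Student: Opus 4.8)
The goal is to show that for any prescribed point $\om \in Z$ we can find an extended metric in the M\"obius structure $\cM$ having $\om$ as its remote point. The natural candidate is a \emph{Bourdon-type metric based at $\om$}: fix some auxiliary point $\rho_0 \in \cM$ without remote point (or handle the case $\Om(\rho_0)\neq\es$ separately) and a reference point $p\in Z$ distinct from $\om$, and define
\[
\rho_\om(x,y) := \frac{\rho_0(x,y)\,\rho_0(p,?)}{\cdots}
\]
— more precisely, the standard construction takes $\rho_\om(x,y) = \lim$ of cross ratios, using that the cross ratio triple $\crt(x,y,?,\om)$ is intrinsic to $\cM$. The plan is to write down the explicit formula $\rho_\om(x,y) := [x,\om,y,?]$-type expression built only from $\crt(\cdot)$, check it is symmetric and vanishes exactly on the diagonal, verify it satisfies the (ultra)triangle inequality, confirm $\rho_\om(x,\om)=\infty$ for $x\neq\om$, and finally verify $\id:(Z,\rho_0)\to(Z,\rho_\om)$ preserves $\crt$, so that $\rho_\om\in\cM$.

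First I would recall the inversion/Bourdon construction from M\"obius geometry: if $\rho\in\cM$ and $\om\in Z_{\om}(\rho)$ is not yet remote, then for a chosen normalizing point one sets
\[
\rho_\om(x,y) := \frac{\rho(x,y)}{\rho(x,\om)\,\rho(y,\om)}\cdot c,
\]
with $c$ a positive constant (depending on the normalization), and shows directly from the definition of the cross ratio triple that $\crt_{\rho_\om} = \crt_\rho$ on all admissible quadruples not involving $\om$, while quadruples involving $\om$ get the correct degenerate value. The case where $\rho$ already has a remote point $\om'\neq\om$ needs the analogous formula with $\rho(x,\om)$ interpreted appropriately, or one first passes through a remote-point-free representative. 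Since the paper has just invoked that an ultrametric M\"obius space is ptolemaic in the sense of \cite{FS}, I would lean on whatever existence statement \cite{FS} provides for ptolemaic spaces — namely that in a ptolemaic M\"obius space one can always normalize any given point to be the remote point — and then use Lemma \ref{omega-metrics-are-ultrametric} is \emph{not} needed here (it goes the other direction), but the ptolemaic inequality is exactly what guarantees the resulting $\rho_\om$ genuinely satisfies the triangle inequality.

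The key steps in order: (1) reduce to the case that $\cM$ contains a representative $\rho$ with $\om\in Z_{\om}(\rho)$, i.e. $\om$ is an ordinary (non-remote) point of $\rho$; if the only obvious representative has its remote point elsewhere, produce such a $\rho$ by an inversion centered away from $\om$. (2) Define $\rho_\om$ by the inversion formula above, a priori only a nonnegative symmetric function on $Z_{\om}\times Z_{\om}$ that is zero on the diagonal. (3) Verify non-degeneracy: $\rho_\om(x,y)=0 \iff x=y$. (4) Verify the triangle inequality; by ptolemy (equivalently, since $\crt$ is ultrametric, by directly inspecting the three products $\rho(x,y)\rho(z,w)$ etc.) the inverted function still satisfies it — this is where the ptolemaic/ultrametric hypothesis is essential. (5) Extend $\rho_\om$ to all of $Z$ by setting $\rho_\om(x,\om):=\infty$ for $x\neq\om$ and $\rho_\om(\om,\om):=0$, so $\Om(\rho_\om)=\{\om\}$. (6) Check $\crt_{\rho_\om}=\crt_\rho=\crt_{\cM}$ on every admissible quadruple, splitting into the sub-cases according to how many entries equal $\om$ (zero, one, or two), using the standard conventions for $\infty$ stated in the paper; this shows $\id:(Z,\rho)\to(Z,\rho_\om)$ is M\"obius, hence $\rho_\om\in\cM$.

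The main obstacle is step (4): making sure the inverted function really is an (extended) metric, i.e. that the triangle inequality survives the inversion. For a general M\"obius space the inversion of a metric need not be a metric — it only is when the space is ptolemaic, and the cleanest route is to cite the ptolemaic property (already asserted, via \cite{FS}) rather than re-derive it; in the ultrametric setting one can alternatively give a short direct argument: for distinct $x,y,z\in Z_{\om}$ the triple $(\rho(x,y):\rho(x,z):\rho(y,z))$ has its two largest entries equal, and dividing each of $\rho(x,y),\rho(x,z),\rho(y,z)$ by the appropriate product $\rho(\cdot,\om)\rho(\cdot,\om)$ rescales them in a way that preserves the "two largest coincide" condition (since the scaling of the triple as a whole is a projective operation), so $\rho_\om$ restricted to $Z_{\om}$ is again ultrametric and in particular satisfies the (ultra)triangle inequality. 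A secondary, more bookkeeping-heavy obstacle is step (1): cleanly producing a representative of $\cM$ in which $\om$ is non-remote when the natural starting representative has a different remote point — but this is itself just another inversion, so it reduces to the same computation.
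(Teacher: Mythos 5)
Your overall route is sound and genuinely different from the paper's: the paper gives no proof at all, merely asserting that an ultrametric M\"obius space is ptolemaic in the sense of \cite{FS} and importing from there the fact that any prescribed point can be made remote. Your plan --- take a representative $\rho\in\cM$ in which $\om$ is not already remote (if it is, there is nothing to prove), invert at $\om$ via $\rho_\om(x,y)=\rho(x,y)/(\rho(x,\om)\rho(y,\om))$, check the triangle inequality, declare $\om$ remote, and verify $\crt_{\rho_\om}=\crt_\rho$ by cases on how often $\om$ occurs in a quadruple --- is the standard inversion construction. What the paper's citation buys is brevity; what your construction buys is an explicit formula, independence from \cite{FS}, and the extra information that $\rho_\om$ is actually an ultrametric on $Z\sm\{\om\}$.

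However, your justification of the crucial step (4) is wrong as written. You claim that for the starting representative $\rho$ the triple $(\rho(x,y):\rho(x,z):\rho(y,z))$ already has its two largest entries equal; this is false in general --- the paper's own four-point example exhibits metrics in an ultrametric M\"obius class that are not themselves ultrametrics (it holds only when $\rho$ has a remote point, by Lemma \ref{omega-metrics-are-ultrametric}). Moreover, dividing the three entries by three \emph{different} products is not a projective operation on the triple and need not preserve the condition that the two largest coincide. The observation that rescues the step is this: multiplying $\rho_\om(x,y)$, $\rho_\om(x,z)$, $\rho_\om(y,z)$ by the single common factor $\rho(x,\om)\rho(y,\om)\rho(z,\om)$ yields exactly the three entries $\rho(x,y)\rho(z,\om)$, $\rho(x,z)\rho(y,\om)$, $\rho(x,\om)\rho(y,z)$ of $\crt_\rho(x,y,z,\om)$, which is ultrametric by the hypothesis on $\cM$; hence the two largest of the three $\rho_\om$-distances coincide, so $\rho_\om$ is an ultrametric (in particular a metric) on $Z\sm\{\om\}$. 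With that substitution, and the routine bookkeeping for a pre-existing remote point $\om'\neq\om$ (where one sets $\rho_\om(\om',y):=1/\rho(\om,y)$), your argument closes.
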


\section{Trees} \label{sec:t}

For this paper a {\em tree} is a metric space
$(X,d)$ with the following two properties:

\begin{enumerate}

 \item $X$ is {\em uniquely geodesic} : if $x,y \in X$, then there exists a unique isometric map
 $c:[0,\ell] \to X$, $\ell=d(x,y)$ with $c(0)=x$ and $c(\ell)=y$. The image of $c$ is denoted by
 $[x,y]$ and called the (geodesic) {\em segment} between $x$ and $y$. The points $x$ and $y$ are called the 
 {\em endpoints} of $[x,y]$. Note that $[x,y] =[y,x]$.
 
 \item If two geodesic segments $[x,y]$ and $[y,z]$ have only one point in common, which is an endpoint of each segment,
 then their union is also a segment; i.e.: if $[x,y]\cap [y,z] =\{y\}$ then 
 $[x,z]=[x,y] \cup [y,z]$.
\end{enumerate}

For the distance in $X$ we also use the notation $|xy|:= d(x,y)$. 

A tree is called {\em geodesically complete}, if every nontrivial segment
is contained in a geodesic line, i.e. if every isometric map 
$[0,\ell]\to X$, $\ell > 0$ extends to an isometric map 
$\R \to X$.

Trees are $\CAT(-1)$ spaces in the sense of comparison geometry (see e.g. \cite{BS}) and are in particular
geodesic Gromov hyperbolic spaces. Actually they are $0$-hyperbolic and $\CAT(\kappa)$ for all $\kappa \in \R$.
We now collect some well known facts about trees.

If $x,y,z \in X$ are three points, then there exists a unique point
$w := \Trip(x,y,z)\in X$ with 
$\{ w\} =[x,y]\cap [y,z] \cap [x,z]$.
The point $w$ is called the {\em tripod} of $x,y,z$.

We have 
$|xw|=(z|y)_x$, 
$|yw|=(x|z)_y$ and
$|zw|=(x|y)_z$, where we use the so called {\em Gromov product}
$(x|y)_z := \frac{1}{2}(|zx|+|zy|-|xy|)$.

To define the boundary at infinity we call 
a sequence $(x_i)$ in $X$ a {\em Gromov sequence}, if for some 
basepoint $o\in X$, $(x_i|x_j)_o \to \infty$ for $i,j \to \infty$.
Two Gromov sequences $(x_i) ,(y_i)$ are called equivalent, if 
$(x_i|y_i)_o\to \infty$. The equivalence classes are the points
of the boundary $\pari X$.

We call a tree $X$ {\em non-elementary}, if the cardinality  $|\pari X| \geq 3$.
Thus (up to isometry) the only nontrivial elementary geodesically complete tree is
the euclidean line $\R$.

Given a point $a \in \pari X$ and a point $x \in X$, there exists a unique
geodesic ray (i.e. a unique isometric map) $c:[0,\infty) \to X$, such that
$c(0)=x$ and $c(\infty)=a$ (which means that the sequence $c(i)$, $i\in \N$, represents $a$).
We denote the image of this ray as 
$[x,a)$.

Given two distinct points $a,b \in \pari X$ and $x \in X$, there exists a unique point
$u \in X$ such that $[x,a) \cap [x,b) = [x,u]$. Then $ (a,b) := [u,a) \cup [u,b)$ is the image of
an isometric embedding $\R \to X$ and called the line between $a$ and $b$.
If $(a_i), (b_i)$ are sequences in $X$ with $a_i \to a$ and $b_i \to b$, then
$(a_i|b_i)_x = |xu|$ for $i$ sufficiently large. We define $(a|b)_x := \lim (a_i|b_i)_x = |xu|$.
For $x,y \in X$ and $a \in \pari X$ we also define $(a|x)_y := \lim (a_i|x)_y$, where $(a_i)$ is
a sequence in $X$ converging to $a$.

If $a \in \pari X$, the Busemann function
$B_a:X\times X \to \R$ is defined as
$$B_a(x,y)= \lim ( |xa_i|-|ya_i| ) = (a|y)_x -(a|x)_y$$
where $(a_i)$ is a sequence converging to $a$. All these limits are well defined.

For $x \in X$ and $a,b \in \pari X$ let
$\rho_x(a,b) := e^{-(a|b)_x}$. The $\rho_x$ defines the {\em Bourdon metric }  (compare \cite{Bou} ) on $\pari X$ with basepoint 
$x \in X$. This is an ultrametric. 
Since
$(a|b)_x-(a|b)_y= \frac{1}{2}(B_a(x,y)+B_b(x,y))$,
we have 
$$\rho_y(a,b)= \la(a) \la(b) \rho_x(a,b)$$
where $\la=\la_{x,y}:\pari X\to \R$ is $\la(a)=e^{\frac{1}{2}B_a(x,y)}$.

In particular the metrics $\rho_x$ and $\rho_y$ are M\"obius equivalent and they define
(in the case that $X$ is nonelementary) the same M\"obius structure on $\pari X$.

We denote this M\"obius structure by $\cM_{X}$ and call it the {\em canonical M\"obius structure} on $X$.
This M\"obius structure is ultrametric.

For $\om \in \pari X$ and $o \in X$ we define for $a,b \in \pari X \setminus \{\om\}$ 
$$(a|b)_{\om,o} := (a|b)_o-(a|\om)_o-(b|\om)_o.$$
Then
$$\rho_{\om,o}(a,b) := e^{-(a|b)_{\om,o}} = \frac{\rho_o(a,b)}{\rho_o(a,\om) \rho_o(b,\om)}$$
defines a metric on $\pari X \setminus \{\om\}$ which can be considered as an extended metric on $\pari X$ with
remote point $\om$. This metric is also contained in $\cM_X$.

\section{Geodesically complete trees versus ultrametric M\"obius spaces} \label{sec:tree-versus-M}

\begin{thm}\label{filling-thm}
Let $(Z,\cM)$ be a complete ultrametric M\"obius space. 
Then there is a nonelementary geodesically complete tree X, such that $(\partial_{\infty} X,\cM_{X})$
is M\"obius equivalent to $(Z,\cM)$.
\end{thm}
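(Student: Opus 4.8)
The plan is to construct the tree $X$ directly as a set of metrics in $\cM$, mimicking the construction of fact~3 but taking it as the definition rather than a consequence. Concretely, fix a point $\om \in Z$ and, using Lemma~\ref{ultrametric-is-ptolemaic}, choose $\rho_\om \in \cM$ with $\Om(\rho_\om) = \{\om\}$; by Lemma~\ref{omega-metrics-are-ultrametric} this $\rho_\om$ is an honest ultrametric on $Z_\om$. An ultrametric space has a canonical description via its "balls": the collection of all closed balls $\ov B_r(z)$, ordered by inclusion, forms a tree-like poset. I would define $X$ to be the set of all (nonempty) closed balls of $(Z_\om,\rho_\om)$ of positive radius together with points at the appropriate "infinite" levels, and define $d$ so that the distance between two balls $B \supset B'$ is $\log(\mathrm{radius}(B)/\mathrm{radius}(B'))$, extended to incomparable balls $B, B'$ via their smallest common enclosing ball $B''$ by $d(B,B') = d(B,B'') + d(B'',B')$. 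One then checks the two tree axioms: unique geodesics come from the total order along nested chains of balls, and the concatenation axiom is immediate from the ultrametric structure. Completeness of $(Z,\cM)$ is exactly what is needed to guarantee geodesic completeness — every descending chain of balls either terminates at a point of $Z_\om$ (a Cauchy sequence of the first type, which converges by completeness) so the geodesic can be continued past it, and every ascending chain is unbounded (Cauchy of the second type, converging to $\om$), so rays extend to lines.

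The next step is to identify $\pari X$ with $Z$ and to show the induced M\"obius structure is $\cM$. A boundary point of $X$ is an end of the tree of balls; the ends are naturally in bijection with: (i) points $z \in Z_\om$, reached by the chain of balls shrinking down to $z$, and (ii) the single point $\om$, reached by the chain of balls expanding to all of $Z_\om$. This gives a bijection $\pari X \cong Z$. Under this identification I would compute the Bourdon-type metric $\rho_{\om, x_0}$ on $\pari X \sm \{\om\}$ with remote point $\om$, based at the ball $x_0 = \ov B_1(z_0) \in X$ for some fixed $z_0$: a direct computation of Gromov products in the tree of balls should give $(a|b)_{x_0} = -\log \rho_\om(a,b) + \text{const}$, hence $\rho_{\om,x_0} = \rho_\om$ up to scaling, which lies in $[\rho_\om] = \cM$. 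Since $\cM_X$ is generated by any single one of its extended metrics and $\rho_{\om,x_0} \in \cM_X$ as well (by the final remark of Section~\ref{sec:t}), we get $\cM_X = [\rho_\om] = \cM$, so $\id \colon (\pari X, \cM_X) \to (Z,\cM)$ is M\"obius. Nonelementarity is clear since $|Z| \geq 3$.

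The main obstacle, I expect, is the bookkeeping in the ball construction when $(Z_\om,\rho_\om)$ has "gaps" in its value set — e.g. if there is no ball of radius exactly $r$ but balls of radii approaching $r$ from below and from above with different limits. To get a genuine $\R$-tree (continuous geodesics, not just a simplicial object) one must insert the missing intermediate balls formally, defining $X$ as a suitable quotient or completion: the cleanest route is probably to let $X$ be the set of pairs $(B, t)$ where $B$ is a closed ball and $t \in [\,-\log(\text{radius of } B),\ -\log(\text{radius of smallest ball strictly containing every point of } B)\,)$, modulo the obvious identifications, which realizes the tree as an honest metric space. Verifying that this $X$ is geodesically complete then reduces precisely to the two forms of the Cauchy condition in Definition~\ref{lem:Cauchy-sequence} and the completeness hypothesis, and verifying uniqueness of geodesics reduces to the ultrametric inequality for $\rho_\om$. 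Once the space is correctly set up, the remaining verifications (tree axioms, boundary identification, M\"obius equivalence) are routine, if somewhat tedious.
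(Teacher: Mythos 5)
Your construction is essentially the paper's, just phrased in ball language: once you pass to pairs $(B,t)$ to fill the gaps in the value set, the tree of closed balls of $(Z_\om,\rho_\om)$ is exactly the paper's quotient $X=(Z_\om\times\R)/\sim$ under the correspondence $[z,t]\leftrightarrow(\ov B_{e^{-t}}(z),t)$; the distance formula (sum of radial distances to the smallest common enclosing ball) agrees with the paper's $d([z_1,t_1],[z_2,t_2])=t_1+t_2-2\min(t_1,t_2,-\ln\rho(z_1,z_2))$, and the final computation identifying $\rho_{\om,x_0}$ with $\rho_\om$ is the same as in the paper.

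The one point you should correct is where completeness of $(Z,\cM)$ enters. Geodesic completeness of $X$ requires no completeness hypothesis at all: every point of your tree is anchored at an actual center $z\in Z_\om$, so any segment extends downward along $s\mapsto \ov B_{e^{-s}}(z)$ for all $s$ and upward by letting the radius grow without bound; this is exactly how the paper argues (both types of natural geodesics visibly extend to bi-infinite lines). What completeness actually buys is the surjectivity of the natural map $Z\to\pari X$: a ray descending through balls $\ov B_{e^{-s}}(z_s)$ forces $\rho_\om(z_s,z_{s'})\le e^{-s}$ for $s'>s$, so the centers form a Cauchy sequence, and completeness supplies a limit point $z\in Z_\om$ representing that end. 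Without completeness the construction still yields a geodesically complete tree, but its boundary is the completion $\ov Z$ rather than $Z$. So your sentence asserting that the ends are ``naturally in bijection'' with $Z_\om\cup\{\om\}$ is precisely the step that consumes the completeness hypothesis and needs the argument you instead attached to geodesic completeness; as written, the surjectivity of the boundary identification is asserted without proof while the hypothesis is spent on a claim that does not need it.
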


\begin{proof}
By Lemma \ref{ultrametric-is-ptolemaic} we know that for an 
ultrametric M\"obius structure $(Z,\cM)$ there is $\rho\in\cM$ such that $\Omega(\rho)=\lbrace\omega\rbrace$.
By Lemma \ref{omega-metrics-are-ultrametric} $\rho$ is defines an ultrametric on $Z_{\om}$. We define  the space
$$X:=(Z_{\omega}\times\R)\slash \sim\quad$$
where
$$ (z_1,t_1)\sim (z_2,t_2) \Longleftrightarrow t_1=t_2\:\wedge \: t_1\leq -\ln \rho(z_1,z_2)$$
for $z_1,z_2\in Z_{\omega}$ and $t_1,t_2\in\R$. 
Since $\rho$ is an ultrametric $\sim$ is an equivalence relation.
The space $X$ will be the "filling" of $Z$.  Therefore we define 
\begin{align*}
d([z_1,t_1],[z_2,t_2]):=t_1+t_2-2\min(t_1,t_2,-\ln \rho(z_1,z_2)).
\end{align*}
Similar to section 6 of \cite{Hu} one can show that $d$ is a well defined metric.\\ 
We first show that $(X,d)$ 
has a natural bicombing, i.e. given
$[z_1,t_1], [z_2,t_2]\in X$, there is a {\em natural geodesic} between these points, 
which can be described as follows:\\
Assume w.l.o.g. $t_1<t_2$. If $t_1\leq -\ln \rho(z_1,z_2)$, the natural geodesic 
joining $[z_1,t_1]$ to $[z_2,t_2]$ is $\gamma: [t_1,t_2]\to X$, $\gamma(s):= [z_2,s]$. 
If $t_1> -\ln \rho(z_1,z_2)$, let $a:= t_1+\ln \rho(z_1,z_2) +t_2+\ln \rho(z_1,z_2)$. 
We define the natural geodesic joining $[z_1,t_1]$ to $[z_2,t_2]$ to be
\begin{align*}
\gamma:[0,a]\to X \quad \gamma(s):=\Bigg\lbrace
\begin{matrix}
[z_1,t_1-s]\hspace{1.8cm} & s\leq t_1+\ln \rho(z_1,z_2)\\
[z_2,-\ln \rho(z_1,z_2)+s] & s> t_1+\ln \rho(z_1,z_2)
\end{matrix}
\end{align*}
It is straight forward to check that $\gamma$ is a geodesic with  $\gamma(0)=[z_1,t_1]$ and $\gamma(a)=[z_2,t_2]$. 
The bicombing implies in particular that $X$ is geodesic. 
For any three points $[z_1,t_1]$, $[z_2,t_2]$, $[z_3,t_3]\in X$ there is a geodesic triangle formed by natural geodesics. 
Using the ultrametric property of $\rho$ we can assume w.l.o.g. $\rho(z_1,z_2)\leq \rho(z_1,z_3)=\rho(z_2,z_3)$. 
By definition of the natural geodesics it follows that this geodesic triangle is $0$-thin in the sense of Gromov and the 
tripod is $[z_1,-\ln \rho(z_1,z_2)]$. In particular for any three points $x,y,z \in X$, there exists a tripod $u$ which is
contained in all natural geodesics between the three points. Thus the equality $d(x,z) + d(z,y) = d(x,y)$ holds 
only if $z=u$ and hence $z$ lies on the natural
geodesic from $x$ to $y$. This implies that the natural geodesics are (up to parametrization) the unique geodesics. 
Thus $X$ is uniquely geodesics and satisfies the first defining property of a tree.
By definition of the natural geodesics it is easy to check that also the second defining property of a tree is satisfied.

In order to prove that $X$ is geodesically complete, we will show that any two points lie on a geodesic line. As both types of natural geodesics can be extended to bi-infinite lines,
namely $\gamma$ is either of the form $\gamma(t)=[z,t]$ for some $z\in Z_{\omega}$ or $\gamma(t)=[z_1,-t]$ for $t<-\ln\rho(z_1,z_2)$ and $\gamma(t)=[z_2,t]$ 
for $t\geq -\ln\rho(z_1,z_2)$, $X$ is geodesically complete.\\ 

We will show that the geodesic boundary of $X$ equals $Z$. 
We define a map 
$\iota:Z \to \di X$ in the following way:
For $z\in Z_{\om}$ let $\iota (z) :=\ga_z(\infty)$ where
$\gamma_z (t):=[z,t]$. 
Furthermore $\iota(\omega) := \ov{\ga}_z(\infty)$ where $\ov{\ga}_{z}(t):= [z,-t]$ and $z \in Z_{\om}$ is arbitrary. Note that for different $z\in Z_{\om}$ 
the rays $\ov{\ga}_z$ are equivalent in terms of the geodesic boundary, as $[z_1,-t]=[z_2,-t]$ for $-t\leq -\ln\rho (z_1,z_2)$. 
It is clear that the map $\iota$ is injective.

To show surjectivity let $v'\in \di X \sm\{\iota(\om)\}$ be given.
Let $\ga:[0,\infty) \to X$ be the unit speed ray with 
$\ga(0)=[u,0]$, where $u\in Z_{\om}$ is some basepoint, and $\ga(\infty) = v'$.
We know that two points can be joined by the unique natural geodesic  
as described above. Since $v'\neq \iota(\omega)$  we see that there is $s_0\in [0,\infty)$, such that 
$\ga(s_0)=[z_{s_0},0]$ and 
for $s>s_0$ it is $\gamma(s)=[z_s,s-s_0]$ for some points $z_s \in Z_{\om}$. 
Consider now numbers $s'>s>s_0$. We have $d(\ga(s'),\ga(s))=s'-s$ as well as
$d(\ga(s'),\ga(s))= s'+s-2s_0 - \min\{s-s_0,-\ln\rho(z_s,z_{s'})\}$.
Hence $-\ln\rho(z_s,z_{s'}) \geq s-s_0$. 
By completeness of $(Z,\rho)$ this implies that the points $z_s$ converge in $Z_{\om}$ for $s\to \infty$.
Let $z_s\to v \in Z_{\om}$ for $s\to \infty$.
Then $[z_s,t]=[v,t]$ for $t$ sufficiently large. Thus $\ga(s)=[v,s-s_0]$ for $s$ large which implies that
$\iota(v)=v'$.

Finally we show that
the M\"obius structures coincide.
Choose a basepoint $u \in Z_{\om}$.
We actually
show that $\iota: (Z,\rho) \to (\di X,\rho_{\omega',[u,0]})$ is an isometry,
where $\rho_{\omega',[u,0]} = e^{-(.|.)_{\om',[u,0]}}$ is the metric in the canonical M\"obius structure of $\pari X$
with remote point $\om'=\iota(\om)$.\\
For $a,b \in Z_{\om}$ we compute, where we use $a'=\iota(a)$, $b'=\iota(b)$,
\begin{align*}
(a'|b')_{\om',[u,0]}&=(a'|b')_{[u,0]} - (a'|\om')_{[u,0]} -(b'|\om')_{[u,0]}\\
&=\lim_{t\to  \infty} (\  ([a,t]|[b,t])_{[u,0]} - ([a,t]|[u,-t])_{[u,0]} -([b,t]|[u,-t])_{[u,0]} \ )\\
&=\lim_{t\to\infty}\frac{1}{2} (2t - d([a,t],[b,t])) \\
& =\lim\limits_{t\to\infty} \frac{1}{2}(2t-(2t-2\min\lbrace t,-\ln\rho(a,b)\rbrace))\\
&=-\ln\rho(a,b)
\end{align*}
which implies
$\rho_{\omega',[u,0]}(a',b')=\rho(a,b)$.

\end{proof}


\begin{lem}\label{distance-in-terms-of-crossratio}
Let $(X,d)$ be a tree and let $a,b,c_1,c_2\in\pari X$ points,
such that $a,b,c_1$ and $a,b,c_2$ are distinct. Let $u_i=Trip(a,b,c_i)$, then 
\begin{align*}
|u_1 u_2|=\pm \ln([a,c_1,c_2,b]),
\end{align*}
while we have a positive sign if $u_2\in[u_1,b)$ and a negative if $u_1\in[u_2,a)$.
\end{lem}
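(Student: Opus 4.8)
The plan is to reduce the claim to a one–dimensional distance computation on the single line $(a,b)$, evaluated through a Bourdon metric based at $u_1$. First I would record the structural fact that both tripods lie on $(a,b)$: by definition $u_i=\Trip(a,b,c_i)$ lies simultaneously on the lines $(a,b)$, $(a,c_i)$ and $(b,c_i)$, so in particular $u_1,u_2\in(a,b)$. Since $(a,b)$ is the image of an isometric embedding of $\R$ with $a$ and $b$ as its two ends, the point $u_2$ lies either on the ray $[u_1,b)$, or on $[u_1,a)$, or equals $u_1$; in all cases $|u_1u_2|$ is just the distance along $(a,b)$. Then I would pass to $\rho:=\rho_{u_1}$, so that $\rho(p,q)=e^{-(p|q)_{u_1}}$, and since the cross ratio does not depend on the chosen representative of $\cM_X$,
\[
\ln[a,c_1,c_2,b]\;=\;\ln\frac{\rho(a,c_2)\,\rho(c_1,b)}{\rho(a,c_1)\,\rho(c_2,b)}\;=\;(a|c_1)_{u_1}+(c_2|b)_{u_1}-(a|c_2)_{u_1}-(c_1|b)_{u_1}.
\]

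Next I would use the interpretation $(p|q)_x=\dist(x,(p,q))$ for $p,q\in\pari X$, which is immediate from the description of lines in Section \ref{sec:t} (the nearest point of $(p,q)$ to $x$ is the point $u$ with $[x,p)\cap[x,q)=[x,u]$). Since $u_1$ lies on both $(a,c_1)$ and $(b,c_1)$, the products $(a|c_1)_{u_1}$ and $(c_1|b)_{u_1}$ vanish, so $\ln[a,c_1,c_2,b]=(c_2|b)_{u_1}-(a|c_2)_{u_1}$. Now I bring in the second tripod. Writing $(a,b)=[u_2,a)\cup[u_2,b)$, $(a,c_2)=[u_2,a)\cup[u_2,c_2)$ and $(b,c_2)=[u_2,b)\cup[u_2,c_2)$, the line $(a,c_2)$ shares the ray $[u_2,a)$ with $(a,b)$ and $(b,c_2)$ shares $[u_2,b)$ with $(a,b)$. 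If $u_2\in[u_1,b)$, then $u_1\in[u_2,a)\subset(a,c_2)$, so $(a|c_2)_{u_1}=0$, while $u_1\notin(b,c_2)$ and the geodesic from $u_1$ to any point of $(b,c_2)$ runs through $u_2$, so $u_2$ realizes $\dist(u_1,(b,c_2))$ and $(c_2|b)_{u_1}=|u_1u_2|$; hence $\ln[a,c_1,c_2,b]=|u_1u_2|$. If instead $u_2\in[u_1,a)$, the roles of $a$ and $b$ are exchanged, $(c_2|b)_{u_1}=0$ and $(a|c_2)_{u_1}=|u_1u_2|$, so $\ln[a,c_1,c_2,b]=-|u_1u_2|$. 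Finally, if $u_1=u_2$ both Gromov products vanish and both sides are $0$. Altogether $|u_1u_2|=\pm\ln[a,c_1,c_2,b]$, with the positive sign exactly when $u_2\in[u_1,b)$ (equivalently $u_1\in[u_2,a)$), as claimed.

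Apart from the standing hypotheses one should check that the quadruple $(a,c_1,c_2,b)$ is admissible — it is, even when $c_1=c_2$, since then no entry occurs more than twice — and that the cross ratio may legitimately be computed in $\rho_{u_1}\in\cM_X$. The only genuine work is the bookkeeping in the middle step: deciding which of the six Gromov products $(\,\cdot\,|\,\cdot\,)_{u_1}$ vanish and expressing the surviving one through $|u_1u_2|$. This is entirely controlled by the picture of the ideal ``quadrilateral'' spanned by $a,b,c_1,c_2$: its spine is the line $(a,b)$, carrying the subsegment $[u_1,u_2]$, with the ray to $c_1$ attached at $u_1$ and the ray to $c_2$ attached at $u_2$. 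The step I would be most careful to justify, rather than assert, is exactly this picture — the coincidences of sub-rays of $(a,b)$, $(a,c_2)$ and $(b,c_2)$ and the resulting nearest-point projections — since that is where the argument could go wrong.
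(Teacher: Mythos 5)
Your proof is correct and follows essentially the same route as the paper's: both compute $[a,c_1,c_2,b]$ in a Bourdon metric based at a point of the line $(a,b)$ (you use $u_1$ itself, the paper a point $o$ chosen so that $u_1,u_2\in[o,b)$) and read off the surviving Gromov products as signed distances along that line. Your parenthetical observation that the two sign conditions in the statement are logically equivalent is accurate — the ``negative'' condition is evidently a typo for $u_2\in[u_1,a)$ — and your reading of the intended convention agrees with the paper's own proof.
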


\begin{proof}
Let $o\in (a,b)$ s.t. $u_1,u_2\in [o,b)$ and $\rho_o$ denotes the Bourdon metric with some basepoint $o\in X$. 
Let $\mu_1=|ou_1|$ and $\mu_2=|ou_2|$, then it is $\rho_o(a,c_1)=1$, $\rho_o(a,c_2)=1$, $\rho_o(b,c_1)=e^{-\mu_1}$, $\rho_o(b,c_2)=e^{-\mu_2}$ and hence
\begin{align*}
&[a,c_1,c_2,b]=\frac{\rho_o(a,c_2)\rho_o(c_1,b)}{\rho_o(b,c_2)\rho_o(c_1,a)}=e^{-\mu_1+\mu_2}.\\
\Longrightarrow \quad & \ln([a,c_1,c_2,b])=\mu_2-\mu_1.
\end{align*}
If $\mu_1\leq \mu_2$, $\ln([a,c_1,c_2,b])=|u_1u_2|$ and if $\mu_1\geq \mu_2$, $\ln([a,c_1,c_2,b])=-|u_1u_2|$, which proves the claim.
\end{proof}

\begin{thm}
Let $X$ be a nonelementary geodesically complete tree and 
let $X^{'}$ be an arbitrary tree with $|\partial_{\infty}X'|\geq 3$. Then for
any Moebius embedding $f:\pari X\to \pari X^{'}$ w.r.t. the natural Moebius structures, there exists a unique isometric embedding $F:X\to X^{'}$.
s.t. the extension of $F$ to the boundary coincides with $f$.\\
If $f$ is in addition surjective and $X^{'}$ geodesically complete, then $F$ is an isometry.   
\end{thm}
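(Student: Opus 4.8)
The plan is to reconstruct $F$ from $f$ by exploiting geodesic completeness of $X$: every $x\in X$ lies on a complete geodesic line, hence on a line $(a,b)$ with $a,b\in\pari X$, and after fixing a third boundary point $c$ the position of $x$ on $(a,b)$ relative to $\Trip(a,b,c)$ is data that $f$ must transport to $X'$ through the cross ratio. Concretely, given $x\in X$, I extend a nondegenerate segment through $x$ to a line (geodesic completeness), obtaining distinct $a,b\in\pari X$ with $x\in(a,b)$, and I pick $c\in\pari X\sm\{a,b\}$ (possible since $|\pari X|\ge 3$). Put $p:=\Trip(a,b,c)\in(a,b)$ and let $\tau\in\R$ be the coordinate of $x$ on $(a,b)$ with origin $p$ and positive direction towards $b$, so $|\tau|=|xp|$. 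Since $f$ is injective, $f(a),f(b),f(c)$ are distinct, so $p':=\Trip(f(a),f(b),f(c))\in(f(a),f(b))\sub X'$ is defined, and I \emph{define} $F(x)$ to be the point of the line $(f(a),f(b))$ with coordinate $\tau$ relative to $p'$, positive direction towards $f(b)$.

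The heart of the proof is that $F(x)$ is independent of the choices of $a,b,c$; I would prove this in three steps. (i) \emph{Fixed line, different auxiliary point.} The proof of Lemma~\ref{distance-in-terms-of-crossratio} shows that the coordinate of $\Trip(a,b,c')$ relative to $\Trip(a,b,c)$ along $(a,b)$ (positive towards $b$) equals $\ln[a,c,c',b]$; the same formula holds in $X'$ for the $f$-images, and $[f(a),f(c),f(c'),f(b)]=[a,c,c',b]$ because $f$ is M\"obius, so passing from $c$ to $c'$ shifts both the origin $p'$ and the coordinate $\tau$ by compensating amounts and does not change $F(x)$. (ii) \emph{Two lines through $x$ with a common endpoint}, say $(a,b)$ and $(a,b_1)$: as $x$ lies on both lines it lies on their common arm $[q,a)$, where $q:=\Trip(a,b,b_1)$, so $x$ sits at distance $|xq|$ from $q$ on the $a$-side in either line; in $X'$ the arms from $q':=\Trip(f(a),f(b),f(b_1))$ towards $f(a)$ inside $(f(a),f(b))$ and inside $(f(a),f(b_1))$ coincide, so, modulo (i), both descriptions of $F(x)$ yield the point of $[q',f(a))$ at distance $|xq|$ from $q'$. (iii) \emph{Two arbitrary lines $(a,b),(a_1,b_1)$ through $x$}: since $[x,a_1)$ and $[x,b_1)$ meet only in $x$, at least one of them, say $[x,d)$ with $d\in\{a_1,b_1\}$, meets $[x,a)$ only in $x$, so $(a,d)$ is again a line through $x$, and the chain $(a,b),(a,d),(a_1,b_1)$ joins the original two by steps each sharing a boundary endpoint; (i) and (ii) then conclude. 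Note that only step (i) uses that $f$ is M\"obius; (ii) and (iii) use only injectivity.

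With $F$ well defined, the remaining assertions are straightforward. $F$ is an isometric embedding: for $x,y\in X$ extend $[x,y]$ to a line $(a,b)$ and pick $c\in\pari X\sm\{a,b\}$; then $F(x),F(y)$ lie on $(f(a),f(b))$ with coordinates relative to $\Trip(f(a),f(b),f(c))$ equal to those of $x,y$ relative to $\Trip(a,b,c)$, whence $|F(x)F(y)|=|xy|$. Being isometric, $F$ preserves Gromov products and hence induces a boundary map $\pari F\colon\pari X\to\pari X'$; choosing $x_i\in[\Trip(a,b,c),a)$ with $x_i\to a$ gives $F(x_i)\to f(a)$, so $\pari F=f$. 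For uniqueness, any isometric embedding $G$ whose boundary extension is $f$ must send the line $(a,b)$ onto $(f(a),f(b))$ and the tripod $\Trip(a,b,c)$ onto $\Trip(f(a),f(b),f(c))$, preserving distances and the side of the tripod towards $b$; comparison with the definition of $F$ forces $G=F$. Finally, if $f$ is surjective and $X'$ is geodesically complete, each $x'\in X'$ lies on a line $(f(a),f(b))$ and equals $F(x)$ for the $x\in(a,b)$ with the matching coordinate, so the isometric embedding $F$ is onto and is therefore an isometry.

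I expect the main obstacle to be the well-definedness of $F$: keeping the orientations and signs in the cross-ratio identity of step (i) consistent enough that the shifts of $p'$ and of $\tau$ cancel, and checking in step (iii) that any two lines through a point are always joined by a chain of length at most two of endpoint-sharing lines. The underlying geometric fact being used --- forced by geodesic completeness of $X$ --- is that $X$ is swept out by the lines between its boundary points and that these lines, together with their tripods, are rigid under any cross-ratio--preserving map of the boundary.
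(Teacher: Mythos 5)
Your proposal is correct and follows essentially the same route as the paper: define $F$ on each line $(a,b)$ by signed distance to $\Trip(a,b,c)$, use the cross-ratio formula $|u_1u_2|=\pm\ln[a,c_1,c_2,b]$ (the paper's Lemma~\ref{distance-in-terms-of-crossratio}) to show independence of $c$, reduce arbitrary pairs of lines through $x$ to a chain of lines sharing boundary endpoints, and then deduce isometry, boundary compatibility, uniqueness, and surjectivity exactly as the paper does. Your step (iii) is a slightly cleaner packaging of the paper's final case analysis (inserting the intermediate line $(a,d)$ rather than writing out the explicit eight-term chain), but the underlying argument is identical.
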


\begin{proof}
First we define a map $F:X\to X'$. For a given $x\in X$ there are, since $X$ is nonelementary and geodesically complete, 
points $a,b\in\pari X$, s.t. $x\in (a,b)$. Let $c\in \pari X\backslash \lbrace a,b\rbrace$ and 
$\gamma_{a,b,c}$ the unit speed geodesic with $\gamma(-\infty)=a$, $\gamma(\infty)=b$ and $\gamma(0)=Trip(a,b,c)$. Then $x=\gamma(t_x)$ 
for $t_x=\pm|Trip(a,b,c)x|$, while the sign is positive if $x\in [Trip(a,b,c),b)$ and negative if $x\in (a, Trip(a,b,c)]$. 
Let $a'=f(a)$, $b'=b$ and $c'=c$, we define $F_{a,b,c}(x):=\gamma'_{a',b',c'}(t_x)$ for $\gamma'_{a',b',c'}$ 
the unit speed geodesic with $\gamma'(-\infty)=a'$, $\gamma'(\infty)=b'$ and $\gamma'(0)=Trip(a',b',c')$. 
We have to show that $F_{a,b,c}$ is independent of different choices of $a,b$ and $c$. 
Let $a_1,b_1,c_1,a_2,b_2,c_2\in\pari X$, s.t. $x\in (a_1,b_1)$, $x\in (a_2,b_2)$ and 
the three points $a_i,b_i,c_i$ are mutually distinct for $i=1,2$. In
three steps we will show the following \vskip.15mm
\textbf{Claim:} $F_{a_1,b_1,c_1}(x)=F_{a_2,b_2,c_2}(x)$.\vskip.15cm
\textbf{Step 1:} $F_{a,b,c}(x)=F_{b,a,c}(x)$\vskip.15cm
By construction it follows $\gamma_{b,a,c}(-t)=\gamma_{a,b,c}(t)$, the absolute value of $t_x$ stays the same, 
but the sign of $t_x$ changes by interchanging $a$ and $b$ in the above construction. In particular this implies $F_{a,b,c}(x)=F_{b,a,c}(x)$.\vskip.15mm
\textbf{Step 2:} If $x\in (a,b) \cap (a,b')$ then $F_{(a,b,b')}(x) = F_{(a,b',b)}(x)$  and 
if $x\in (a,b) \cap (a',b)$ then $F_{(a,b,a')}(x) = F_{(a',b,a)}(x)$\vskip.15cm
Note that $\ga_{a,b,b'}$ and $\ga_{a,b',b}$ coincide on $(-\infty,0]$ and
$\ga_{a,b,a'}$ coincides with $\ga_{a',b,a}$ on $[0,\infty)$. \vskip.15cm
\textbf{Step 3:} $F_{a,b,c_1}(x)=F_{a,b,c_2}(x)$ in the case that $c_1,c_2 \in \pari X \setminus\{a,b\}$\vskip.15cm
Let $u_i=Trip(a,b,c_i)$ and $t_x(a,b,c_i)=\pm|Trip(a,b,c_i)x|$ be the signed distances as in the construction. By construction it's 
\begin{align*}
t_x(a,b,c_1) & =t_x(a,b,c_2)- |u_1u_2|\quad \text{if} \quad u_2 \in[u_1,b)\\
t_x(a,b,c_1) & =t_x(a,b,c_2) + |u_1u_2|\quad \text{if} \quad u_1 \in[u_2,b).
\end{align*}
Let $u'_i=Trip(a',b',c'_i)$ and let $\pm|F_{a,b,c_2}(x)u'_1|$ be the signed distance being 
positive if $F_{a,b,c_2}(x)\in [u'_1,b')$ and negative if $u'_1\in [F_{a,b,c_2}(x),b')$. Due to elementary arguments it follows
\begin{align*}
\pm|F_{a,b,c_2}(x)u'_1| & =t_x(a,b,c_2)- |u'_1u'_2|\quad \text{if} \quad u'_2 \in[u'_1,b')\\
\pm|F_{a,b,c_2}(x)u'_1| & =t_x(a,b,c_2) + |u'_1u'_2|\quad \text{if} \quad u'_1 \in[u'_2,b').
\end{align*}
We have $[a,c_1,c_2,b]=[a',c'_1,c'_2,b']$, as $f$ is a M\"obius map. Using this and 
Lemma~\ref{distance-in-terms-of-crossratio}, it follows that $\pm|F_{a,b,c_2}(x)u'_1|=t_x(a,b,c_1)$. 
By construction $t_x(a,b,c_1)=\pm|F_{a,b,c_1}(x)u'_1|$. Thus $F_{a,b,c_1}(x)$ and $F_{a,b,c_2}(x)$ lie on
the geodesic $(a',b')$ with the same signed distance to the point $u'_1$, hence it follows $F_{a,b,c_1}(x)=F_{a,b,c_2}(x)$.

Using step 1 and step 3 the claim follows in the case that $\{a_1,b_1\}=\{a_2,b_2\}$.
Thus (by eventually interchanging $a_2$ and $b_2$) we can assume that $b_1 \neq b_2$.
Consider first the case $a_1 = a_2$.
Using step 2 and step 3 we get
$$  F_{a_1,b_1,c_1}(x)  = F_{a_1,b_1,b_2}(x)  =  F_{a,b_2,b_1}(x) = F_{a_1,b_2,c_2}(x) $$
and hence the claim.
Let us now assume that  $a_1,a_2,b_1,b_2$ are all distinct. Let $u= Trip(a_1,b_1,a_2)$ and $v=Trip(a_1,b_1,b_2)$.
By eventually interchanging $a_2$ and $b_2$, we can assume that the order of the points on the line $(a_1,b_1)$ are
$a_1<u<v<b_1$. Note that then $x \in (a_i,b_j)$ for every choice of $i,j$. 
Using the various equalities we obtain:
\begin{align*}
\hspace{.5cm}& F_{a_1,b_1,c_1}(x)=F_{a_1,b_1,b_2}(x)=F_{a_1,b_2,b_1}(x)= F_{b_2,a_1,b_1}(x)\\
=\: &  F_{b_2,a_1,a_2}(x)=F_{b_2,a_2,a_1}(x)=F_{a_2,b_2,a_1}(x)=F_{a_2,b_2,c_2}(x)
\end{align*}

In particular we have a well defined map $F:X\to X^{'}$ by $F(x):=F_{a,b,c}(x)$, $a,b,c\in\pari X$ distinct and $x\in (a,b)$.\\
We show that $F$ is an isometry. Let $x,y\in X$ be arbitrary, 
due to geodesically completeness there are $a,b\in\pari X$, s.t. $x,y\in (a,b)$. As the distance $|xy|$ is determined by the 
signed distances of $x$ and $y$ to $Trip(a,b,c)$ for some $c\in\pari X\backslash\lbrace a,b\rbrace$ and $F(x)$, $F(y)$ have 
the same signed distances to $Trip(a',b',c')$ by construction, it follows that $|xy|=|F(x)F(y)|$.\\
It follows immediately from the construction that for distinct points $a,b \in \di X$, $F$ maps the line
$(a,b)$ to the line $(f(a),f(b)$. Thus
restricted to the boundary $F$ equals $f$.

We show that such an $F$ is unique. Any isometry $\hat{F}:X\to X'$ with $\hat{F}_{|\di X}=f$
maps the line $(a,b)$ for $a,b\in\di X$ to the line $(f(a),f(b))$. Let $\hat{F}$ and $F$ be such isometries, 
then they differ on $(a,b)$ by a translation. If we take $c\in\di X\setminus \lbrace a,b\rbrace$ it 
is $Trip(a,b,c)=(a,b)\cap(b,c)\cap(a,c)$, which shows that $F(Trip(a,b,c))=Trip(f(a),f(b),f(c))= \hat{F}(Trip(a,b,c))$. 
In particular $F_{|(a,b)}=\hat{F}_{|(a,b)}$. But as the geodesic $(a,b)$ was arbitrary and $X$ is geodesically complete it follows $F=\hat{F}$.\\
The last step is to show that $F$ is surjective if $f$ is surjective and $X'$ 
is geodesically complete. Let $x'\in X'$ be arbitrary, then there is by geodesically 
completeness $a',b'\in \pari X'$, s.t. $x'\in (a',b')$. Let $c'\in\pari X\backslash\lbrace a',b'\rbrace$ 
then $x'$ is determined by the signed distance to $Trip(a',b',c')$. In particular this 
implies that the point $x\in (f^{-1}(a'),f^{-1}(b'))$ with the same signed distance to $Trip(f^{-1}(a'),f^{-1}(b'),f^{-1}(c'))$ is 
mapped to $x'$ by construction. We note that $f^{-1}$ exists, as $f$ is as a M\"obius map injective and assumed to be surjective. 
\end{proof}

\begin{cor}
The tree constructed in theorem~\ref{filling-thm} is unique up to isometry.
\end{cor}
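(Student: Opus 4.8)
The plan is to read this off as an immediate consequence of the preceding theorem on Möbius embeddings. Suppose $(X,d)$ and $(X',d')$ are both nonelementary geodesically complete trees such that $(\pari X,\cM_X)$ and $(\pari X',\cM_{X'})$ are each M\"obius equivalent to $(Z,\cM)$. First I would fix bijective M\"obius maps $g:(\pari X,\cM_X)\to (Z,\cM)$ and $g':(\pari X',\cM_{X'})\to (Z,\cM)$; since $|Z|\geq 3$ this in particular guarantees $|\pari X'|\geq 3$, so the hypotheses of the embedding theorem are met. Composing, $f:=(g')^{-1}\circ g:(\pari X,\cM_X)\to(\pari X',\cM_{X'})$ is a bijective M\"obius map (recall from Section~\ref{sec:M-structure} that the inverse of a bijective M\"obius map is again M\"obius).

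Now apply the preceding theorem to $f$: since $f$ is a M\"obius embedding, there is a unique isometric embedding $F:(X,d)\to(X',d')$ whose boundary extension is $f$; and since $f$ is in addition surjective and $X'$ is geodesically complete, $F$ is an isometry. Hence $X$ and $X'$ are isometric, which is exactly the asserted uniqueness up to isometry of the tree produced by Theorem~\ref{filling-thm}. I do not expect any genuine obstacle here — all the work has already been done in the two theorems above; the only points to be careful about are checking that the composition of M\"obius equivalences is again a (bijective) M\"obius map and that the cardinality hypothesis $|\pari X'|\geq 3$ is inherited from $|Z|\geq 3$, both of which are routine.
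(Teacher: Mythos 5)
Your proposal is correct and follows essentially the same route as the paper: compose the two M\"obius equivalences with $(Z,\cM)$ to obtain a bijective M\"obius map $f:\pari X\to\pari X'$ and then invoke the preceding embedding theorem to upgrade it to an isometry $F:X\to X'$. The paper's own proof is just a terser version of this argument, so there is nothing to add.
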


\begin{proof}
Let $X,X^{'}$ be two such trees. As the boundaries with the natural M\"obius structures both equal $(Z,\cM)$ (up to M\"obius isomorphism), 
there is a M\"obius 
isomorphism $f:\pari X\to \pari X^{'}$. If we apply the theorem above the result follows.
\end{proof}

\section{Antipodal diameter 1 metrics} \label{sec:ad1}

In this section we recall some facts from \cite{Bi} and give a slight generalization of one of its result.

Let $(Z,\cM)$ be an arbitrary M\"obius space. By $\cM^a_1$ we denote the subset of antipodal, 
diameter 1 metrics. 
The set $\cM^a_1$ may be empty.

Biswas has showed in \cite{Bi} that there is a natural metric on $\cM^a_1$. 
We note that he assumed $Z$ to be compact, but for the definition of this metric that is not necessary. We recall some of the 
notation and the definition of the metric.\\
Let $(Z,\cM)$ be an arbitrary M\"obius structure and $\cM^a_1$ the subset of antipodal diameter one metrics. For $\rho_1,\rho_2\in \cM^a_1$ and $\xi\in Z$ we define
\begin{align*}
\frac{d\rho_1}{d\rho_2}(\xi):=\frac{\rho_1(\eta,\eta^{'})}{\rho_1(\xi,\eta)\rho_1(\xi,\eta^{'})}\frac{\rho_2(\xi,\eta)\rho_2(\xi,\eta^{'})}{\rho_2(\eta,\eta^{'})},
\end{align*}
with $\eta,\eta^{'}\in Z\setminus\lbrace\xi\rbrace$ arbitrary but distinct. One can show that this definition is 
independent of the choice of $\eta,\eta^{'}\in Z\setminus\lbrace\xi\rbrace$. Similar as in \cite{Bi} one can show that
\begin{align*}
d_{\cM^a_1}(\rho_1,\rho_2):=\sup_{\zeta\in Z}\ln\frac{d\rho_1}{d\rho_2}(\zeta)
\end{align*}
is a metric on $\cM^a_1$. Furthermore he shows that for a CAT(-1) space $X$ and $\rho_x,\rho_y$ Bourdon metrics, we
have $\frac{d\rho_y}{d\rho_x}=\lambda_{x,y}^2$. By the definition of $\lambda$ it follows $\ln\frac{d\rho_y}{d\rho_x}(\zeta)=B_{\zeta}(x,y)$.

\begin{thm}\label{tree-isometric-to-antipodal-diam1-metrics}
Let $(Z,\cM)$ be a ultrametric M\"obius space, and let $(X,d)$ be a geodesically complete tree 
such that $(\di X,\cM_X)=(Z,\cM)$. Let $i:(X,d)\to (\cM^a_1,d_{\cM^a_1})$ be the map sending $x$ to the Bourdon metric $\rho_x$, then $i$ is an isometry.
\end{thm}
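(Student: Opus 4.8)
The plan is to show that $i$ is both isometric (distance-preserving) and surjective onto $\cM^a_1$. The key fact to exploit, recalled in the section, is that for Bourdon metrics $\rho_y = \lambda_{x,y}^2\,\rho_x$ in the sense that $\ln\frac{d\rho_y}{d\rho_x}(\zeta) = B_\zeta(x,y)$; thus $d_{\cM^a_1}(\rho_x,\rho_y) = \sup_{\zeta\in\partial_\infty X} B_\zeta(x,y)$, and the whole problem reduces to computing this supremum in a tree. First I would check that $i$ is well-defined, i.e. that each Bourdon metric $\rho_x$ on $\partial_\infty X$ is indeed an antipodal diameter-$1$ metric: since $X$ is geodesically complete and nonelementary, every geodesic ray from $x$ extends to a line, so for any $a\in\partial_\infty X$ there is $b$ with $x\in(a,b)$, giving $(a|b)_x=0$ and $\rho_x(a,b)=1$; and $\rho_x(a,b)=e^{-(a|b)_x}\le 1$ always, so $\diam = 1$ and every point is antipodal. (For $|\partial_\infty X|\ge 3$ this also uses that $\rho_x$ is a genuine metric, which is classical for trees.)

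Next I would compute $d_{\cM^a_1}(\rho_x,\rho_y)$. Fix $x,y\in X$ and let $c:[0,L]\to X$, $L=|xy|$, be the geodesic from $x$ to $y$. Since $X$ is geodesically complete, $c$ extends to a line with endpoints $\xi^+,\xi^-\in\partial_\infty X$ such that $x,y$ both lie on $(\xi^-,\xi^+)$ with $x$ closer to $\xi^-$. For the Busemann function one has $B_\zeta(x,y) \le |xy|$ for every $\zeta$, with equality exactly when the ray $[x,\zeta)$ "goes past" $y$, i.e. when $y\in[x,\zeta)$; concretely $B_{\xi^+}(x,y)=|xy|$. Hence $\sup_{\zeta} B_\zeta(x,y) = |xy|$, and therefore $d_{\cM^a_1}(\rho_x,\rho_y) = |xy|$, so $i$ is an isometric embedding. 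Here I would be a little careful that the supremum is attained (it is, at $\xi^+$, because geodesic completeness guarantees such a boundary point exists — this is exactly where nonelementary + geodesically complete is used), and that $\rho_x\neq\rho_y$ when $x\neq y$, which follows since the distance is $|xy|>0$.

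Finally, surjectivity: given $\rho\in\cM^a_1$, I want to produce $x\in X$ with $\rho_x=\rho$. Since $(\partial_\infty X,\cM_X)=(Z,\cM)$ is an ultrametric M\"obius space, $\rho$ is an ultrametric by Lemma~\ref{diam1-antipodal-metrics-are-ultrametric}. The natural approach is to fix a basepoint $o\in X$ and compare $\rho$ with $\rho_o$: the function $\zeta\mapsto \ln\frac{d\rho}{d\rho_o}(\zeta)$ is a bounded "Busemann-like" function on $\partial_\infty X$, and the heart of the matter is to show it equals $B_\zeta(o,x)$ for a (unique) point $x\in X$. Concretely, writing $\rho = \lambda^2\rho_o$ with $\lambda(\zeta) = \left(\frac{d\rho}{d\rho_o}(\zeta)\right)^{1/2}$, the diameter-$1$ and antipodal conditions on $\rho$ translate, via $\rho(a,b)=\lambda(a)\lambda(b)\rho_o(a,b)$, into: $\sup_a\sup_b \lambda(a)\lambda(b)\rho_o(a,b)=1$ and for every $a$ there is $b$ realizing the value $1$. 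Using the ultrametric/tree structure this forces $\lambda(\zeta) = e^{-t}$ for $\zeta$ in one "branch" seen from $o$ and a compatible shape on the others, which is precisely the Busemann cocycle of a single point $x$ at the appropriate location; uniqueness of $x$ follows from injectivity of $i$ already established. I expect this surjectivity step to be the main obstacle: identifying which bounded functions on an ultrametric boundary arise as Busemann functions $B_\cdot(o,x)$ and pinning down $x$ from the antipodal/diameter-$1$ normalization requires the combinatorial structure of the tree. One clean way to organize it: given $\rho$, pick any $a\in Z$ with $\Omega$-metric representative and use the filling construction from Theorem~\ref{filling-thm} applied to $\rho$ to read off $x$ directly as a point $[z,t]$ in the model $X$, then check $\rho_x=\rho$ by the same computation as in that theorem's last display. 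Either route reduces to bookkeeping with Gromov products, so I would present the distance computation in full and treat surjectivity via the explicit model.
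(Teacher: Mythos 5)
Your first two steps (well-definedness of $i$ and the computation $d_{\cM^a_1}(\rho_x,\rho_y)=\sup_{\zeta}B_\zeta(x,y)=B_{\xi}(x,y)=|xy|$ with $\xi$ the endpoint of the extension of $[x,y]$ past $y$) coincide with the paper's argument and are correct. The genuine gap is surjectivity: you correctly identify it as the main obstacle, but you do not actually prove it --- you offer two candidate strategies (characterizing which functions $\lambda$ arise as Busemann cocycles, or rerunning the filling construction of Theorem~\ref{filling-thm} on $\rho$) and execute neither. The second route is not as clean as you suggest, since the filling construction takes as input a representative with a remote point $\om$, while $\rho\in\cM^a_1$ is a bounded diameter-$1$ metric; one would first have to pass to an $\om$-representative and then track its relation to $\rho$, which is essentially the whole problem again.

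The missing content, as the paper does it, is concrete and short. Pick $a,b\in\pari X$ with $\rho(a,b)=1$ and any third point $c$; by Lemma~\ref{diam1-antipodal-metrics-are-ultrametric} $\rho$ is an ultrametric, so $\rho(a,c)=1$ or $\rho(b,c)=1$, say the former. Place $x$ on the ray $[\Trip(a,b,c),a)$ at distance $-\ln\rho(b,c)$ from the tripod; then $\rho_x$ and $\rho$ agree on $\{a,b,c\}$. For a fourth point $d$, the equality of cross ratio triples $\crt_{\rho}(c,d,a,b)=\crt_{\rho_x}(c,d,a,b)$ gives a proportionality factor $\nu>0$ between the vectors $(\rho(c,d),\rho(b,d),\mu\rho(a,d))$ and $(\rho_x(c,d),\rho_x(b,d),\mu\rho_x(a,d))$, and the ultrametricity of \emph{both} $\rho$ and $\rho_x$ (each must assign the value $1$ to one of the pairs $(a,d)$, $(b,d)$) rules out the mixed cases and forces $\nu=1$, hence $\rho=\rho_x$ everywhere. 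This case analysis is the crux of the surjectivity proof and is absent from your proposal; asserting that the antipodal and diameter-$1$ conditions ``force'' $\lambda$ to be a Busemann cocycle is precisely the statement to be proved, not an argument for it.
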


\begin{proof}
The first step is to show that $i$ is bijective.
First observe that the geodesically completeness of $(X,d)$ implies that for every $x\in X$, the metric $\rho_x$ is antipodal and of diameter 1.
This implies that the image $i$ lies in $\cM^a_1$. As $Z$ has cardinality greater than two, it easily 
follows that $\rho_x\neq\rho_y$ for $x\neq y$, which means that $i$ is injective.\\
We show that $i$ is surjective. Let $\rho\in\cM^a_1$ be given. Let $a,b\in \pari X$, s.t. $\rho(a,b)=1$ and $c\in\pari X$ arbitrary, 
but distinct from $a$ and $b$. By lemma~\ref{diam1-antipodal-metrics-are-ultrametric} $\rho$ is an ultrametic, 
therefore  $\rho(a,c)=1$ or $\rho(b,c)=1$. 
W.l.o.g. we assume $\rho(a,c)=1$. Let $x$ be in $[Trip(a,b,c),a)$, s.t. $|xTrip(a,b,c)|=-\ln(\rho(b,c))$, 
then $(\rho_x)_{|\lbrace a,b,c\rbrace}=\rho_{|\lbrace a,b,c\rbrace}$. For $d\in \pari X\backslash\lbrace a,b,c\rbrace$ and $\mu:= \rho(c,b)=\rho_x(c,b)$, we have
by the M\"obius equivalence of  $\rho$ and $\rho_x$
\begin{align*}
&(\rho(c,d) :\rho(b,d):\mu \rho(a,d))=crt_{\rho}(c,d,a,b)=crt_{\rho_x}(c,d,a,b)\\ = & (\rho_x(c,d):\rho_x(b,d):\mu \rho_x(a,d)).
\end{align*}
Thus there is a factor $\nu >0$ such that we have the vector equality
$$ \nu \ (\rho(c,d),\rho(b,d),\mu \rho(a,d)) = (\rho_x(c,d),\rho_x(b,d),\mu \rho_x(a,d)).$$
We will show that $\nu =1$.
As $\rho$ and $\rho_x$ are 
ultrametrics we have $\rho(b,d)=1$ or $\rho(a,d)=1$, as well as $\rho_x(b,d)=1$ or $\rho_x(a,d)=1$. 
If $\rho_x(a,d)=1\: \wedge\: \rho(a,d)=1$ then $\nu =1$ and 
in the same way $\rho_x(b,d)=1 \:\wedge\: \rho(b,d)=1$ implies $\nu =1$.
One easily sees that in the remaining cases
\begin{align*}
\rho_x(a,d)=1\:\wedge\:\rho(a,d)<1\:\wedge\:\rho(b,d)=1\:\wedge\:\rho_x(b,d)<1
\end{align*}
and
\begin{align*}
\rho_x(a,d)<1\:\wedge\:\rho(a,d)=1\:\wedge\:\rho(b,d)<1\:\wedge\:\rho_x(b,d)=1
\end{align*}
the vector equality from above can not be satisfied.
Thus $\nu = 1$, which implies that $(\rho_x)_{|\lbrace a,b,c,d\rbrace}=\rho_{|\lbrace a,b,c,d\rbrace}$.
In particular this means that the construction of $x$ is 
independent of the choice $c$ or $d$. Furthermore it follows that for arbitrary $c,d\in\pari X$ we have $\rho_x(c,d)=\rho(c,d)$. 
Hence $\rho$ is the 
Bourdon metric $\rho_x$.

Finally we have to show
$d(x,y)=d_{\cM^a_1}(\rho_x,\rho_y)$. For given points $x,y\in X$ we can extend the geodesic segment $[x,y]$ to a geodesic ray $[x,\xi)$ where $\xi\in \pari X$. 
Thus $d(x,y)=B_{\xi}(x,y)$. Since $B_{\zeta}(x,y)\leq d(x,y)$ for all $\zeta\in\di X$ we have
\begin{align*}
d_{\cM^a_1}(\rho_x,\rho_y)=\sup_{\zeta\in \pari X}\ln\frac{d\rho_y}{d\rho_x} (\zeta)=\sup_{\zeta\in \pari X}B_{\zeta}(x,y)=B_{\xi}(x,y)=d(x,y).
\end{align*}
\end{proof}


\bigskip
\begin{tabbing}

Jonas Beyrer \\ 
Viktor Schroeder\\

\\
Institut f\"ur Mathematik \\

Universit\"at Z\"urich \\Winterthurer Strasse 190 \\

 CH-8057 Z\"urich, Switzerland\\
 \\
 {\tt jonas.beyrer@math.uzh.ch} \\
 {\tt viktor.schroeder@math.uzh.ch}\\

\end{tabbing}

\end{document}